\documentclass[leqno,12pt]{article}

 \usepackage{amsmath}
\usepackage{amscd}
\usepackage{amsopn}
\usepackage{amsthm}
\usepackage{amsfonts,amssymb}
\usepackage{amsfonts,bbm}
\usepackage{latexsym}

\usepackage{color}

 \setlength{\textheight}{24cm}
\setlength{\topmargin}{-1cm}
\setlength{\textwidth}{15cm}
 \setlength{\hoffset}{-5.5mm}
 \setlength{\voffset}{-7mm}

\makeatletter
\@addtoreset{equation}{section}
\setcounter{secnumdepth}{3}

\makeatother

\newtheorem{lemma}{LEMMA}[section]
\newtheorem{proposition}[lemma]{PROPOSITION}
\newtheorem{corollary}[lemma]{COROLLARY}
\newtheorem{theorem}[lemma]{THEOREM}
\newtheorem{remark}[lemma]{REMARK}



\newcommand{\real}{\mathbbm{R}}
\newcommand{\nat}{\mathbbm{N}}


\renewcommand{\a}{\alpha}

\newcommand{\vp}{\varphi}
\newcommand{\ve}{\varepsilon}

\newcommand{\reald}{{\real^d}}

\newcommand{\und}{\quad\mbox{ and }\quad}
\newcommand{\inv}{^{-1}}

  
\newcommand{\W}{\mathcal W}

\newcommand{\C}{\mathcal C}  
\newcommand{\E}{{\mathcal E}}
\newcommand{\F}{\mathcal F}

\newcommand{\B}{\mathcal B}

\newcommand{\J}{{\mathcal J}}


\newcommand{\itemframe}%
{\setlength{\parskip}{10pt}\begin{enumerate} \setlength{\topsep}{10pt}%
\setlength{\itemsep}{15pt}\setlength{\parsep}{5pt}}



\newcommand{\schluss}{\end{frame}\end{document}}

\newcommand{\Fk}{\F\!_{co}(G)}
\newcommand{\Fi}{\F\!_{ui}(G)}
\title{Compactness of integral  operators\\ and uniform integrability on measure spaces}
\author{Wolfhard Hansen}
\date{}
\begin{document}

\maketitle

\thispagestyle{empty}

\begin{abstract}
  Let $(E,\mathcal E,\mu)$ be a measure space and $G\colon E\times E\to [0,\infty]$ be measurable.
  Moreover, let $\mathcal F\!_{ui}$   denote the set of all $q\in\E^+$ (measurable numerical functions
  $q\ge 0$ on $E$) such that
  $\{G(x,\cdot)q\colon x\in E\}$ is  uniformly integrable, and let~$\mathcal F\!_{co}$ denote the set of all
  $q\in\mathcal E^+$ such that the mapping $f\mapsto G(fq) :=\int  G(\cdot,y) f(y) q(y)\,d\mu(y)$
  is a compact operator on the space  $\mathcal E_b$ of bounded measurable functions on~$E$
  (equipped with the sup-norm).

  It is shown that $\mathcal F\!_{ui}=\mathcal F\!_{co} $ provided both $\mathcal F\!_{ui}$ and $\mathcal F\!_{co} $
    contain strictly positive functions.
  \end{abstract}

  \section{Introduction, notation and first properties}

  In the paper \cite{bjk} on semilinear perturbation of   fractional Laplacians $-(-\Delta)^{\a/2}$
  on~$\reald$, $d\in\nat$, $0<\a< 2\wedge d$, 
  a~Kato class $\J^\a(D)$ of  measurable  functions $q $ on an open set $D$ in $\reald$
  is defined by uniform   integrability of the functions $G_D(x,\cdot) q$, $x\in D$, with respect to
  Lebesgue measure on $D$,   where $G_D$ denotes the corresponding Green function  on $D$
  (see \cite[Definition~1.23]{bjk}).
  Let $\C_0(D)$, $\B_b(D)$,  respectively, denote the space of all real functions on $D$ 
  which are continuous and vanish at infinity with respect to $D$, are Borel measurable and bounded, respectively.
  
 Suppose that $D$ is regular and let $q\colon D\to [0,\infty]$ be Borel measurable. 
  Then, using the continuity of $G_D$ and Vitali's theorem, it is established
  that, provided $q\in\J^\a(D)$,  the mapping
  \begin{equation*}
    K\colon f\mapsto G_D(fq):=   \int G(_D\cdot,y)f(y) q(y)\,d\mu\in \C_0(D)
    \end{equation*} 
  is a~compact operator on $\B_b(D)$ such that $K(\B_b(D))\subset \C_0(D)$ (see 
  the proof of Theorem 2.4 in \cite{bjk}). In \cite[Proposition~1.31]{bjk} it is stated that, conversely,
  $q\in \J^\a(D)$ if (only) $K1=G_Dq\in\C_0(D)$.

  On the other hand, it is easily seen that, as in the classical case $\a=2$,  compactness of  $K$ on $\B_b(D)$
  implies that $G_Dq$ is continuous, and hence $G_Dq\in\C_0(D)$ due to the regularity of $D$
  and the domination principle for $K$ 
  (see \cite[Corollary 4.5]{H-equi-compact} for bounded $D$ and
  \cite[Corollary 5.2,(b)]{bogdan-hansen-semilinear} for the general case).
  So $q\in\J^\a(D)$  if and only if $K$ is a~compact operator on $\B_b(D)$.
  For some partial statements in the classical case see
  \cite[Corollary to Proposition 3.1, Theorem 3.2]{chung-zhao}.
    
\bigskip

In this paper,  we fix a~ measure space $(E,\E,\mu)$ and a~numerical function $G\ge 0$ on $E\times E$
which is $\E\otimes \E$-measurable.
The purpose of this note is to establish that, even in this most general setting, for every measurable $q\ge 0$ on $E$,
uniform integrability of $\{G(x,\cdot)q\colon x\in E\}$ is equivalent to compactness of
the mapping $f\mapsto \int G(\cdot,y)f(y)q(y)\,d\mu$  
on the space of  bounded measurable functions on $E$  
provided that there are strictly positive functions having these properties.

To be more specific let   $\E^+$, $\E_b$, respectively,
denote  the set of all $\E$-measurable numerical functions $f$ on $E$ such that
$f\ge 0$, $f$ is bounded, respectively. We define 
 \begin{equation}\label{def-Gf}
  Gf(x) :=\int G(x,y)f(y)\,d\mu(y), \qquad f\in \E^+, \ x\in E.
\end{equation}
If $q\in \E^+$ and $Gq$ is bounded,  then  $K_q\colon f\mapsto G(fq)$
obviously is a~bounded operator on $\E_b$, equipped with the sup-norm   $\|\cdot\|_\infty$, such that 
$\|K_q\|= \|Gq\|_\infty$. So we are interested in the sets
\begin{eqnarray*} 
  \Fk&:=&\{q\in\E^+\colon \mbox{$Gq$ is bounded and $K_q$ is a~compact operator on $\E_b$}\},\\
  \Fi&:=&\{q\in\E^+\colon \{G(x,\cdot)q, \ x\in E\} \mbox{ is uniformly  integrable}\}.
\end{eqnarray*}
Our main results are that $\Fi\subset \Fk$ if there exists a~strictly positive function in $\Fk$
(Theorem \ref{k-i}),  and $\Fk\subset \Fi$ if there exists a~strictly positive function in $\Fi$
(Theorem \ref{i-k}).  In Section \ref{suff-ex} we provide general examples (covering the situation in \cite{bjk}),
where  the  assumptions are satisfied. 

Before studying the relation between $\Fi$ and $\Fk$, let us recall that a~subset $\F$ of $\E^+$ is
called \emph{uniformly integrable} if,
 for every $\ve>0$, there exists an integrable $g\in\E^+$ such that $\int_{\{f\ge g\}} f\,d\mu\le \ve$
 for every $f\in\F$.  Further, we note some simple facts,  which are trivial for $\Fi$. 

       \begin{lemma}\label{simple} Let $\F=\Fi$ or $\F=\Fk$. The following holds:
         \begin{itemize}
         \item[\rm (1)]
           $\F$ is a~convex cone and $ G(1_{\{q=\infty\}} q)=0$ for all $q\in \F$.
         \item[\rm (2)]
           If $q\in\F$ and $q'\in\E^+$ with $q'\le q$, then $q'\in \F$.
         \item[\rm (3)]
           If $q\in\E^+$ and, for every $\ve>0$, there are $q'\in\F$ and $q''\in\E^+$ with $q=q'+q''$
           and  $Gq''\le \ve$,  then $q\in\F$.
         \end{itemize}
       \end{lemma}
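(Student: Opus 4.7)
The plan is to verify each of the three assertions for $\F = \Fk$; the corresponding statements for $\F = \Fi$ follow from standard properties of uniformly integrable families, as the author suggests. For the formula $G(1_{\{q=\infty\}}q) = 0$ in (1), I observe that in either case the function $G(x,\cdot)q$ is $\mu$-integrable for every $x \in E$ (since $Gq$ is bounded in the $\Fk$ case and since uniform integrability forces integrability of each member in the $\Fi$ case). Consequently $\mu(\{G(x,\cdot) > 0\}\cap\{q = \infty\}) = 0$, and the formula is read off using the convention $0\cdot\infty = 0$. The convex cone property for $\Fk$ is then immediate from the linearity $K_{\alpha q_1 + \beta q_2} = \alpha K_{q_1} + \beta K_{q_2}$ together with the fact that compact operators on $\E_b$ form a linear subspace of the bounded operators.

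For (2), given $q \in \Fk$ and $q' \in \E^+$ with $q' \le q$, I would introduce a measurable function $h\colon E \to [0,1]$ by setting $h := q'/q$ on $\{0 < q < \infty\}$ and $h := 0$ elsewhere. The multiplication operator $M_h\colon f \mapsto fh$ is then bounded on $\E_b$ with $\|M_h\| \le 1$. By construction $hq = q'$ on $\{q < \infty\}$; on $\{q = \infty\}$ the integrand defining $K_q(fh)(x)$ is zero because $h = 0$, while the integrand defining $K_{q'}(f)(x)$ vanishes for $\mu$-a.e.\ $y$, since part (1) applied to $q$ gives $G(x,\cdot) = 0$ $\mu$-a.e.\ on $\{q = \infty\}$. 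Hence $K_{q'}(f) = K_q(fh) = (K_q \circ M_h)(f)$ for every $f \in \E_b$, so $K_{q'}$ is compact as the composition of a compact operator with a bounded one.

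For (3), the decomposition $q = q' + q''$ yields $K_q = K_{q'} + K_{q''}$, and the hypothesis $Gq'' \le \ve$ gives $\|K_q - K_{q'}\| = \|K_{q''}\| = \|Gq''\|_\infty \le \ve$. Letting $\ve \to 0$ exhibits $K_q$ as a norm limit of the compact operators $K_{q'}$, and compactness follows from norm-closedness of the compact operators in the space of bounded operators on $\E_b$.

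The substantive step is (2): there is no \emph{a priori} reason to expect compactness of an integral operator to be monotone in the kernel, and the multiplier trick above seems to be the only natural route to transfer it. The one remaining delicacy, namely the careful treatment of $\{q = \infty\}$, is handled uniformly by part (1), which is why it is grouped together with the convex cone property.
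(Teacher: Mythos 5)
Your proof is correct and takes essentially the same route as the paper: the multiplier $h=q'/q$ (with the paper's $h:=1_{\{q>0\}}q'/q$) giving $K_{q'}=K_q\circ M_h$ for (2), and the norm estimate $\|K_{q''}\|=\|Gq''\|_\infty$ together with norm-closedness of the compact operators for (3). Your extra care on $\{q=\infty\}$ via the integrability of $G(x,\cdot)q$ is a welcome elaboration of details the paper leaves implicit.
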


     Clearly,   (1)  holds for $\Fk$ as well, and to get (2) it suffices to observe that  taking $h:=1_{\{q>0\}} q'/q$
     we have $G(fq')=G(fhq)$ for every $f\in\E_b$. For (3) it suffices to note that $\|K_{q''}\|=\|Gq''\|_\infty$
     and every limit of compact operators on $\E_b$ is compact.

\section{Relation between $\Fk$ and $\Fi$}

\begin{theorem}\label{k-i}
 If there is a~strictly positive function in $\Fk$,  then $\Fk$ contains $\Fi$. 
  \end{theorem}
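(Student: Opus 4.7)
The plan is to apply Lemma 1.1(3): given $q\in\Fi$, I will show that for every $\ve>0$ one can write $q=q'+q''$ with $q'\in\Fk$ and $Gq''\le 2\ve$ uniformly.  Let $q_0$ be a strictly positive function in $\Fk$ (whose existence is the hypothesis), and put
\begin{equation*}
A_n:=\{q_0\ge 1/n\}\cap\{q\le n\},\qquad q_n':=q\,1_{A_n},\qquad q_n'':=q\,1_{A_n^c}.
\end{equation*}
The first paragraph of the proof will handle the easy inclusion $q_n'\in\Fk$: on $A_n$ we have $q_n'\le n\le n^2 q_0$, so $q_n'\le n^2 q_0$ globally, and since $\Fk$ is a convex cone (Lemma 1.1(1)) containing $q_0$ and is hereditary (Lemma 1.1(2)), $q_n'\in\Fk$.

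The bulk of the work is to show $\sup_{x\in E} Gq_n''(x)\to 0$ as $n\to\infty$.  Fix $\ve>0$.  By uniform integrability of $\{G(x,\cdot)q\colon x\in E\}$, choose an integrable $g\in\E^+$ with $\int_{\{G(x,\cdot)q\ge g\}} G(x,y)q(y)\,d\mu(y)\le\ve$ for every $x$.  The first little trick is to replace $g$ by $g\,1_{\{q<\infty\}}$: this enlarges the set $\{G(x,\cdot)q\ge g\}$ only by points of $\{q=\infty\}$, and Lemma 1.1(1) gives $\int_{\{q=\infty\}} G(x,y)q(y)\,d\mu(y)=0$, so the uniform bound is preserved.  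Hence I may assume $g=0$ on $\{q=\infty\}$.  For any measurable set $B$, splitting at the threshold yields the key inequality
\begin{equation*}
\int_B G(x,y)q(y)\,d\mu(y)\;\le\;\ve\;+\;\int_B g\,d\mu.
\end{equation*}

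The second trick is to apply this with $B=A_n^c=\{q_0<1/n\}\cup\{q>n\}$ and observe that $g\,1_{A_n^c}\to 0$ pointwise: indeed $\{q_0<1/n\}\downarrow\emptyset$ because $q_0$ is strictly positive, and $g\,1_{\{q>n\}}\to 0$ because $g$ vanishes on $\{q=\infty\}$.  Dominated convergence (with dominating function $g$) gives $\int_{A_n^c} g\,d\mu\to 0$, so $Gq_n''\le 2\ve$ uniformly for all sufficiently large $n$.  Lemma 1.1(3) then delivers $q\in\Fk$.

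The only real obstacle is the integrand $G(x,y)q(y)$ potentially living on $\{q=\infty\}$, where uniform integrability says nothing and the pointwise convergence of $g\,1_{\{q>n\}}$ would fail; this is precisely what is neutralised by the reduction $g\mapsto g\,1_{\{q<\infty\}}$ using Lemma 1.1(1).  Everything else is bookkeeping.
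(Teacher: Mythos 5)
Your proof is correct and follows essentially the same route as the paper: decompose $q$ into a part dominated by a constant multiple of $q_0$ (hence in $\Fk$ by Lemma \ref{simple}(1),(2)) and a remainder supported on a set over which $\int g\,d\mu$ is small, then bound $G q''$ by $\ve+\int_B g\,d\mu$ via the uniform-integrability threshold $g$ and conclude with Lemma \ref{simple}(3). The only differences are cosmetic: the paper first reduces to $q<\infty$ using Lemma \ref{simple}(1) and takes a single cut set $\{Mq_0<q\}$, whereas you keep $q$ as is, absorb $\{q=\infty\}$ by replacing $g$ with $g\,1_{\{q<\infty\}}$, and use the two-condition sets $A_n$.
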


  \begin{proof} 
    Let $q_0\in\Fk$, $q_0>0$, and $q\in \Fi$.  To prove  $q\in\Fk$ we may, by Lemma~\ref{simple}(1),  assume  that
 $q<\infty$. Let $\ve>0$ and  let  $g\in\E^+$ be integrable such that   
 $A_x:=\{G(x,\cdot)q>g\}$ satisfies
   \begin{equation}\label{ax}
  \int_{A_x} G(x,\cdot)q\,d\mu<\ve, \qquad x\in E.
\end{equation}
Since $\{Mq_0<q\}\downarrow \emptyset$ as $M\uparrow\infty$,
there is $M\in\nat$ such that, defining  $A:=\{Mq_0<q\}$, 
\begin{equation}\label{def-A}
  \int_A g\,d\mu<\ve.
\end{equation}
Let  $q':=1_{E\setminus A} q$ and $q'':=1_A q$. Then $q=q'+q''$, $q'\le Mq_0$. 
By Lemma \ref{simple}(2),  $q'\in\Fk$.
Moreover, $G(x,\cdot)q\le g$ on $E\setminus A_x$. So, by  (\ref{ax}) and (\ref{def-A}), 
\begin{equation*} 
  Gq''(x)=\int_A G(x,\cdot)q\,d\mu \le \int_{A_x} G(x,\cdot)q\, d\mu +
 \int_{A\setminus A_x} g\,d\mu   < 2\ve, \qquad x\in E.
\end{equation*} 
By Lemma \ref{simple}(3), the proof is finished. 
\end{proof}

\begin{theorem}\label{i-k}
  If there is a~strictly positive function in $\Fi$,  then $\Fi$ contains $\Fk$.
\end{theorem}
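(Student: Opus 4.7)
The plan is to mirror the strategy of Theorem~\ref{k-i} while swapping the roles of the two hypotheses: now $q_0\in\Fi$ with $q_0>0$ is given together with $q\in\Fk$, and the aim is to show $q\in\Fi$. As in that proof we may, by Lemma~\ref{simple}(1), assume $q<\infty$ (the part of $q$ on $\{q=\infty\}$ contributes nothing to any of the relevant quantities, since $q\in\Fk$ forces $G(x,\cdot)=0$ $\mu$-a.e.\ on $\{q=\infty\}$ for every $x$). For $\ve>0$ and $M\in\nat$ set
\[
A_M:=\{q>Mq_0\},\qquad q':=1_{E\sms A_M}q,\qquad q'':=1_{A_M}q.
\]
Then $q=q'+q''$ with $q'\le Mq_0\in\Fi$, so Lemma~\ref{simple}(2) gives $q'\in\Fi$, and by Lemma~\ref{simple}(3) it suffices to exhibit $M$ with $\|Gq''\|_\infty<\ve$.

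The heart of the proof is therefore the assertion that $\|G(1_{A_M}q)\|_\infty\to 0$ as $M\to\infty$. Since $q<\infty$ and $q_0>0$, the indicators $1_{A_M}$ decrease to $0$ pointwise. Take any sequence $M_n\uparrow\infty$ and set $f_n:=1_{A_{M_n}}\in\E_b$, so $\|f_n\|_\infty\le 1$. For each fixed $x\in E$, the integrand $G(x,y)f_n(y)q(y)$ is dominated by $G(x,\cdot)q$, which is $\mu$-integrable because $Gq$ is bounded; dominated convergence therefore yields $K_qf_n(x)\to 0$ for every $x\in E$.

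The main obstacle is upgrading this pointwise convergence to uniform convergence, and this is precisely where compactness of $K_q$ enters. Since $(f_n)$ is bounded in $(\E_b,\|\cdot\|_\infty)$, the sequence $(K_qf_n)$ is relatively compact. If $\|K_qf_n\|_\infty$ did not tend to $0$, one could extract a subsequence staying bounded below in sup-norm and, by relative compactness, a further subsequence converging uniformly to some $h\in\E_b$ with $\|h\|_\infty>0$. Uniform convergence implies pointwise convergence, so $h$ would have to coincide with the pointwise limit $0$, a contradiction. Hence $\|G(1_{A_M}q)\|_\infty\to 0$, and combined with the decomposition above this finishes the proof via Lemma~\ref{simple}.
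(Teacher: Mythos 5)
Your proof is correct and follows essentially the same route as the paper: both arguments hinge on using the compactness of $K_q$ to upgrade the pointwise convergence $G(1_{\{q>Mq_0\}}q)\downarrow 0$ (which rests on Lemma~\ref{simple}(1)) to uniform convergence, and then control the remaining piece $1_{\{q\le Mq_0\}}q\le Mq_0$ via the uniform integrability of $q_0$. The only cosmetic difference is that you finish by citing Lemma~\ref{simple}(2),(3) for $\F=\Fi$, whereas the paper writes out the corresponding estimate explicitly (exhibiting $Mg$ as the dominating integrable function on the set $B_x=\{G(x,\cdot)q\ge Mg\}$), which in effect re-proves Lemma~\ref{simple}(3) for $\Fi$ in this special case.
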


\begin{proof} Let $q_0\in\Fi$, $q_0>0$, $q\in\Fk$ and $\ve>0$. By Lemma \ref{simple},(1), 
  $ G(1_{\{q> Mq_0\}}q)\downarrow 0$ pointwise as~$M\uparrow\infty$, hence uniformly on $E$,
  by compactness of~$K_q$.
    So there exists $M\in\nat$ such that
        \begin{equation}\label{pmg}
A:=\{q>Mq_0\} \mbox{ \  satisfies \ }      G(1_Aq)\le \ve. 
    \end{equation} 
By assumption, there is an integrable $g\in \E^+$ such that, for every $x\in E$,
       \begin{equation}\label{q0}
 A_x:=\{G(x,\cdot)q_0\ge g\} \mbox{ \  satisfies \ }      \int_{A_x} G(x,\cdot)q_0\,d\mu \le \ve/M. 
    \end{equation}
    Let us fix $x\in E$ and define $B_x:=\{G(x,\cdot)q\ge Mg\}$. 
    Then
    \begin{equation*}
      B_x\setminus A\subset \{Mg\le G(x,\cdot)q\le  G(x,\cdot)Mq_0\}\subset A_x,
    \end{equation*}
    and hence,   by   (\ref{q0}), 
    \begin{equation*}
      \int_{B_x\setminus A}  G(x,\cdot)q \, d\mu\le M \int_{A_x}  G(x,\cdot)q_0\,d\mu \le \ve .
      \end{equation*} 
      Thus $\int_{B_x}  G(x,\cdot)q \, d\mu\le 2\ve$, by (\ref{pmg}). Since $Mg$ is  integrable,
      the proof is finished.
\end{proof}

 \begin{corollary}\label{equality}
If both $\Fi$ and $\Fk$ contain strictly positive   functions, then $ \Fk =\Fi$.
\end{corollary}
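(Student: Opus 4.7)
The plan is to observe that this corollary follows immediately by combining the two preceding theorems. By hypothesis there exist strictly positive functions $q_0 \in \Fk$ and $q_1 \in \Fi$. I would first invoke Theorem \ref{k-i}: since $q_0 \in \Fk$ is strictly positive, that theorem delivers the inclusion $\Fi \subset \Fk$. I would then invoke Theorem \ref{i-k}: since $q_1 \in \Fi$ is strictly positive, that theorem delivers the reverse inclusion $\Fk \subset \Fi$. Taking the conjunction of the two inclusions yields $\Fk = \Fi$, which is the conclusion.

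There is no genuine obstacle here; all of the analytic content has been deposited in Theorems \ref{k-i} and \ref{i-k}, and the corollary is essentially a bookkeeping statement that the two one-sided results combine to an equality as soon as both positivity hypotheses are simultaneously in force. It is worth emphasizing that the two strictly positive functions are not required to coincide: the argument only needs \emph{some} strictly positive element of $\Fk$ together with \emph{some} strictly positive element of $\Fi$, and these are logically independent conditions. In the concrete settings discussed in Section \ref{suff-ex} one typically produces a single function lying in both classes, which \emph{a fortiori} verifies the hypothesis, but the corollary itself does not demand this stronger form.
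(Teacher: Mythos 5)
Your proof is correct and is exactly how the paper obtains the corollary (the paper leaves it unproved precisely because it is the immediate conjunction of Theorems \ref{k-i} and \ref{i-k}). Your remark that the two strictly positive functions need not coincide is accurate and consistent with the statement's hypotheses.
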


Let $G'\colon E\times E\to [0,\infty]$ be $\E\otimes \E$-measurable and suppose that $G'\le G$.
Then, of course,  $\Fi\subset \F\!_{ui}(G')$. So Theorems \ref{i-k} and \ref{k-i} also imply the following.

\begin{corollary}\label{}   If both $\Fi$ and $\F\!_{co}(G')$ contain strictly positive functions,
  then $\Fk\subset \F\!_{co}(G')$.
  \end{corollary}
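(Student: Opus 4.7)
The plan is to chain the two main theorems of the note through the pointwise domination $G'\le G$. Given $q\in\Fk$, my goal is to exhibit $q\in\F\!_{co}(G')$, so I want to pass $q$ first into $\Fi$, then into $\F\!_{ui}(G')$, and finally into $\F\!_{co}(G')$.

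The first ingredient is the observation the author already highlights: if $\{G(x,\cdot)q\colon x\in E\}$ is uniformly integrable with witness $g\in\E^+$ (i.e.\ $\int_{\{G(x,\cdot)q\ge g\}} G(x,\cdot)q\,d\mu<\ve$ for all $x$), then because $G'\le G$ the family $\{G'(x,\cdot)q\colon x\in E\}$ is dominated by $\{G(x,\cdot)q\colon x\in E\}$ pointwise in $y$, so the same $g$ serves as a witness for uniform integrability of the $G'$-family. Hence $\Fi\subset \F\!_{ui}(G')$, without using either positivity assumption. Next I would invoke Theorem~\ref{i-k}, whose hypothesis (a strictly positive function in $\Fi$) is exactly the first assumption of the corollary: this yields $\Fk\subset \Fi$. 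Composing these two inclusions places $q$ in $\F\!_{ui}(G')$.

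To close the loop I would apply Theorem~\ref{k-i}, but with $G'$ in the role of $G$. The statement and proof of Theorem~\ref{k-i} never use anything about $G$ beyond its $\E\otimes\E$-measurability, so it applies verbatim to $G'$: the hypothesis is that $\F\!_{co}(G')$ contains a strictly positive function, which is precisely the second assumption of the corollary, and the conclusion is $\F\!_{ui}(G')\subset\F\!_{co}(G')$. Combined with the previous paragraph this gives $q\in\F\!_{co}(G')$, as desired.

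There is essentially no obstacle: the entire argument is a three-step chain of inclusions, each of which uses one assumption exactly once. The only point worth flagging is the remark that Theorem~\ref{k-i} is kernel-agnostic, so reapplying it to $G'$ is legitimate; if one wanted to be pedantic, one could restate Theorems~\ref{i-k} and~\ref{k-i} as assertions about an arbitrary measurable kernel on $(E,\E,\mu)$ and then just quote them twice.
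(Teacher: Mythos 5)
Your argument is correct and is exactly the paper's route: the corollary is stated immediately after the observation that $G'\le G$ gives $\Fi\subset \F\!_{ui}(G')$, and the intended proof is precisely the chain $\Fk\subset\Fi\subset\F\!_{ui}(G')\subset\F\!_{co}(G')$ via Theorem~\ref{i-k}, domination, and Theorem~\ref{k-i} applied to $G'$. Your remark that Theorem~\ref{k-i} is kernel-agnostic is the right justification for the last step and matches the paper's implicit use of it.
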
 

  \begin{remark}{\rm
      Of course, the preceding results immediately yield corresponding statements for arbitrary $\E$-measurable
      numerical  functions $q$ on $E$ using $q=q^+-q^-$.
    }
\end{remark}
 
 \section{Examples}\label{suff-ex}
 
 \subsection{First example}
 Let $E$ be a~Borel set in $\reald$, $d\ge 1$, let $\E$ be the $\sigma$-algebra of all Borel sets in $E$
 and~$\mu$ be the restriction of Lebesgue measure on $(E,\E)$. 
 For $x\in E$ and $r>0$, we define  $B(x,r):=\{y\in E\colon
 |y-x|<r\}$.
 Let $G\colon E\times E\to [0,\infty]$ be  measurable and 
 let $\vp\colon  [0,\infty)\to [0,\infty]$ be  measurable such that 
 \begin{equation*}
   G(x,y) \le \vp(|x-y|), \qquad x,y\in E,
 \end{equation*}
 and, for some $a, r,\in (0,\infty)$,  $\vp\le a$ on $(r,\infty)$  and $\int_0^r \vp(t) t^{d-1} dt<\infty$.
 
   \begin{proposition}\label{app}
        \begin{equation*} 
     \F:=\{q\in\E^+\colon
     q\mbox{ integrable, } q\le 1,\  \lim\nolimits_{|x|\to\infty} q(x)=0\}
     \end{equation*} 
     is contained in $\Fi$, and $\Fk\subset \Fi$.
   \end{proposition}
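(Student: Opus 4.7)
My plan is to establish $\F\subset\Fi$ directly by constructing a uniform dominating integrable function, and then to verify that $\F$ contains a strictly positive element so that Theorem~\ref{i-k} immediately gives $\Fk\subset\Fi$.

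For the first step, fix $q\in\F$ and $\ve>0$, and try $g:=Cq$ for a constant $C>a$ to be chosen. Since $q$ is integrable, so is $g$. On $\{q>0\}$, the set $\{G(x,\cdot)q\ge Cq\}$ reduces to $\{G(x,\cdot)\ge C\}$, while on $\{q=0\}$ the integrand $G(x,\cdot)q$ vanishes. The bound $G(x,y)\le\vp(|x-y|)$ together with $\vp\le a$ on $(r,\infty)$ confines this set to $B(x,r)$ as soon as $C>a$, so using $q\le 1$ and the change of variables $z=y-x$, I obtain
\begin{equation*}
\int_{\{G(x,\cdot)q\ge Cq\}}G(x,\cdot)q\,d\mu
\le\int_{B(x,r)\cap\{\vp(|x-\cdot|)\ge C\}}\vp(|x-y|)\,dy
=c_d\int_0^r\vp(t) 1_{\{\vp(t)\ge C\}} t^{d-1}\,dt,
\end{equation*}
where $c_d$ denotes the surface area of the unit sphere in $\reald$. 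This upper bound is independent of $x$, and the assumed finiteness of $\int_0^r\vp(t)t^{d-1}\,dt$ together with dominated convergence ($1_{\{\vp(t)\ge C\}}\to 0$ a.e.\ as $C\to\infty$) makes it smaller than $\ve$ for $C$ sufficiently large. This proves $q\in\Fi$.

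For the second step, the restriction to $E$ of $y\mapsto\min(1,e^{-|y|})$ is Borel, bounded by $1$, strictly positive, integrable, and tends to $0$ at infinity, so it lies in $\F$. Hence $\Fi$ contains a strictly positive function, and Theorem~\ref{i-k} yields $\Fk\subset\Fi$. The only technical point is the uniform integrability estimate above: the split between the near-diagonal and far-diagonal regimes is built into the hypotheses on $\vp$, with boundedness on $(r,\infty)$ localizing the superlevel set $\{G(x,\cdot)\ge C\}$ into a ball of fixed radius independent of $x$, and integrability of $\vp(t)t^{d-1}$ near zero then letting dominated convergence produce a tail bound uniform in $x$. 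Nothing else looks problematic.
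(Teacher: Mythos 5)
Your proof is correct, but it takes a genuinely different route from the paper's for the main inclusion $\F\subset\Fi$. The paper first splits $G(x,\cdot)q\le aq+f_xq$ with $f_x:=1_{B(x,r)}G(x,\cdot)$, and then treats the family $\{f_xq\}$ by a near/far dichotomy in $x$: for $|x|>R+r$ the hypothesis $\lim_{|x|\to\infty}q(x)=0$ makes the \emph{entire} integral $\int f_xq\,d\mu$ small, while for $|x|\le R+r$ all the $f_x$ live in a fixed ball and the dominating function $M1_{B(0,R+2r)}$ works via translation invariance of $\int_{\{f_x\ge M\}}f_x\,d\mu$. You instead use the single dominating function $g=Cq$, observe that the superlevel set $\{G(x,\cdot)q\ge Cq\}$ contributes only through $\{q>0\}\cap\{G(x,\cdot)\ge C\}$, which for $C>a$ sits inside the (closed) ball of radius $r$ about $x$, and bound everything by $c_d\int_0^r\vp(t)1_{\{\vp(t)\ge C\}}t^{d-1}\,dt$, uniformly in $x$. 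This is shorter and, notably, never uses the decay of $q$ at infinity: your argument actually shows that integrability and boundedness of $q$ alone put $q$ in $\Fi$, a slightly stronger statement than the proposition. Two small points you should make explicit but which are not gaps: the superlevel set lands in the closed ball $\{|x-\cdot|\le r\}$ rather than $B(x,r)$ (a Lebesgue-null difference), and the pointwise convergence $1_{\{\vp(t)\ge C\}}\to 0$ holds only for a.e.\ $t\in(0,r)$, which follows because $\int_0^r\vp(t)t^{d-1}\,dt<\infty$ forces $\vp<\infty$ a.e.\ there. Your second step --- exhibiting the strictly positive element $y\mapsto e^{-|y|}$ of $\F\subset\Fi$ and invoking Theorem~\ref{i-k} --- is exactly what the paper does, only stated more explicitly.
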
 
 
   \begin{proof} By Theorem \ref{i-k}, it clearly suffices to prove $\F\subset \Fi$. 
To that end  we may assume without loss of generality that
     \begin{equation*}
       E=\reald \und G(x,y)=\vp(|x-y|), \qquad x,y\in \reald
       \end{equation*} 
       (first extend $G$ to $\reald\times \reald$ by $G(x,y):=0$, if~$x$ or~$y$ are
       in the complement of $E$).

     Let $q\in \F$.         Defining $ f_x:= 1_{B(x,r)} G(x,\cdot)$, we then have
     \begin{equation*}
          G(x,\cdot) q \le aq+  f_xq, \qquad x\in E.
        \end{equation*}
        So it suffices to show that the functions $f_xq$, $x\in E$, are uniformly integrable.

        Let  $\ve>0$ and  $b:=1+\int f_0\,d\mu$. Then $b<\infty$, by assumption on $\vp$,   and
        we may choose $R>0$ such that $q\le \ve/b$ on~$ B(0,R)^c$. 
     If $x\in B(0,R+r)^c$, then  $B(x,r)\cap B(0,R)=\emptyset$, and hence
     \begin{equation}\label{weit}
       \int f_xq\,d\mu\le \frac \ve b \int f_x\,d\mu=\frac \ve b\int f_0\,d\mu\le \ve.
     \end{equation}
   Suppose now that $x\in B(0,R+r)$, and hence $B(x,r)\subset B(0,R+2r)$.  
Let $M>a$ such that  $\int_{\{f_0>M\}} f_0\,d\mu<\ve$ and  $g:=M1_{B(0,R+2r)} $.
 Then
         \begin{equation}\label{nah}
      \int_{\{f_xq\ge g\}} f_xq\,d\mu\le \int_{\{f_x\ge M\}} f_x\,d\mu =\int_{\{f_0\ge M\}} f_0\,d\mu <\ve.
    \end{equation}
    So the functions $f_xq$, $x\in E$, are uniformly integrable.
   \end{proof}

  \subsection{Second example}

  Let $(X,\W)$ be a~balayage space such that  $\W$ contains a~function $0<w_0\le 1$
  (see \cite{BH, H-prag-87, H-course, H-equi-compact}),
  and let $G\colon X\times X\to [0,\infty]$
  be Borel measurable such that,  for every $y\in X$,
  $G(\cdot,y)$ is a~potential on $X$ which is harmonic on $X\setminus \{y\}$. 

  Let $\mu$ be a~positive Radon measure on $X$ and let  $\B(X)$, $\C(X)$, respectively, denote
  the set of all Borel measurable numerical functions, continuous real functions, respectively,  on~$X$. 
  We recall that, for every positive $f\in\B(X)$,    the function $Gf:=\int G(\cdot,y) f(y)\,d\mu(y)$ 
is  lower semicontinuous,   by Fatou's Lemma.

\begin{proposition}\label{bal-ex}
  If there exists $q\in\B(X)$, $q>0$, such that $Gq\in\C(X)$,
  then $\Fi\subset \Fk$.
  \end{proposition}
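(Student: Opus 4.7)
The plan is to invoke Theorem \ref{k-i}: it suffices to produce a strictly positive function in $\Fk$. Starting from the given $q>0$ with $Gq\in\C(X)$, I will build such a function by indicator-truncating $q$ on an exhausting sequence of relatively compact open sets and rescaling so that the resulting potential is globally bounded; compactness of the associated operator then follows from standard equi-compactness results for bounded continuous potentials on balayage spaces.

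In detail, choose an increasing sequence $(V_n)$ of relatively compact open sets with $\bigcup_n V_n=X$ (available since balayage spaces are locally compact with countable base). For $q_n:=1_{V_n}q$, write $Gq=Gq_n+G(1_{V_n^c}q)$. Both summands are lower semicontinuous by Fatou's lemma, and their sum $Gq$ is continuous, hence each summand is continuous. For boundedness, $M_n:=\sup_{\ov{V_n}}Gq<\infty$ by continuity and compactness, and $\alpha_n:=\inf_{\ov{V_n}}w_0>0$ since $w_0$ is lower semicontinuous and strictly positive. Consequently, $Gq_n\le Gq\le M_n\le(M_n/\alpha_n)w_0$ on $\ov{V_n}$; as $Gq_n$ is the potential of a measure supported in $\ov{V_n}$ and $(M_n/\alpha_n)w_0\in\W$, the domination principle yields $Gq_n\le(M_n/\alpha_n)w_0\le M_n/\alpha_n$ throughout $X$. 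Setting
\begin{equation*}
c_n:=2^{-n}/(1+\|Gq_n\|_\infty)\und q^*:=\sum\nolimits_n c_n q_n,
\end{equation*}
we have $q^*>0$ on $X$ (every $x$ belongs to $V_n$ for all large $n$), while $Gq^*=\sum_n c_nGq_n$ is bounded by $1$ and continuous as a uniformly convergent series of continuous functions.

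It remains to conclude that $K_{q^*}$ is compact on $\E_b$, which is the standard consequence of $Gq^*$ being a bounded continuous potential and is covered by the equi-compactness machinery for balayage spaces in \cite{H-equi-compact} together with \cite[Corollary~5.2]{bogdan-hansen-semilinear}. Having exhibited $q^*$ as a strictly positive member of $\Fk$, Theorem \ref{k-i} delivers $\Fi\subset\Fk$. The step most likely to cause trouble is the global boundedness of $Gq_n$: mere continuity of $Gq$ on $X$ guarantees only a local bound, and it is the combination of the strict positivity of $w_0$ on compacta with the domination principle that upgrades this to a uniform bound over all of $X$; the compactness of $K_{q^*}$ itself rests on the cited balayage-space results rather than on a direct computation here.
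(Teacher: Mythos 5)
Your proposal is correct and follows essentially the same route as the paper: exhaust $X$ by compacta, truncate $q$ there, deduce continuity of each $Gq_n$ from the lower semicontinuity of both summands of $Gq$, upgrade the local bound to a global one via $w_0$ and the domination principle, and sum with summable weights before invoking Theorem \ref{k-i}. The only real difference is in the final step: the paper applies \cite[Proposition 4.1]{H-equi-compact} to each $q_n$ separately (whose potential is harmonic off a compact set) and then concludes via Lemma \ref{simple}, whereas you invoke the compactness machinery for $q^*$ directly, which is slightly less safe since the cited result is naturally stated for potentials harmonic outside a compact set; the fix is simply to conclude $q_n\in\Fk$ first and then use that limits of compact operators are compact.
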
 
  
\begin{proof}      By Theorem \ref{k-i}, it suffices to find  a~strictly positive function
    $q_0\in\Fk$.

    We choose compact sets $L_n$, $n\in\nat$, covering $X$. Let $n\in\nat$ and
    \begin{equation*}
      q_n:=1_{L_n}q. 
    \end{equation*}
    Since $Gq_n+G(1-q_n)=Gq\in \C(X)$, we know that
    $q_n$ is a~continuous real potential. It is  harmonic on $X\setminus L_n$.
    Let $a_n:= \sup Gq_n(L_n)/\inf w_0(L_n)$. Obviously, $Gq_n\le a_n w_0$ on $L_n$, hence on~$X$. 
  So $Gq_n\le a_n$ and, by  \cite[Proposition 4.1]{H-equi-compact}, $q_n\in\Fk$. 
    Using Lemma \ref{simple}, we finally obtain that
    \begin{equation*}
      q_0:=\sum\nolimits_{n\in\nat} (a_n2^n)\inv  q_n\in \Fk.
      \end{equation*} 
        \end{proof}

 \end{document}